\newcommand{\C}{\mathbb{C}}
\newcommand{\R}{\mathbb{R}}
\newcommand{\Z}{\mathbb{Z}}
\newcommand{\N}{\mathbb{N}}
\newcommand{\A}{\mathscr A}
\renewcommand{\d}{\mathrm{d}}
\newcommand{\Exp}[1]{\operatorname{e}^{#1}}
\newcommand{\rI}{\operatorname{I}}
\newcommand{\rII}{\operatorname{II}}
\newcommand{\Cs}{\mathscr C}
\newtheorem{pro}{Proposition}
\newtheorem{lemma}{Lemma}
\newtheorem{definition}{Definition}
\newtheorem{theorem}{Theorem}
\begin{document}

\title[Alternative CD formula for MOPRL of Mixed Type]{ Another Christoffel--Darboux Formula  for Multiple Orthogonal Polynomials of Mixed Type}
\author{Gerardo Araznibarreta}
\address{Departamento de F\'{\i}sica Te\'{o}rica II (M\'{e}todos Matem\'{a}ticos de la F\'{\i}sica), Universidad Complutense de Madrid, 28040-Madrid, Spain}
\email{gariznab@ucm.es}
\author{Manuel Ma\~{n}as}
\email{manuel.manas@ucm.es}
\keywords{Multiple orthogonal polynomials, Christoffel-Darboux formula, moment matrices, Jacobi type matrices,  Gaus-Borel factorization}
\subjclass{33C45,42C05,15A23,37K10}
\maketitle

\begin{abstract}
An alternative expression for the Christoffel--Darboux formula  for multiple orthogonal polynomials of mixed type is derived from the $LU$ factorization of the
moment matrix of a given measure and two sets of weights. We use the action of the generalized Jacobi matrix $J$, also responsible for the recurrence relations, on the linear forms and their duals to obtain the  result.
\end{abstract}

\section{Introduction}

In this paper we address a natural question that arises from the $LU$ factorization approach to multiple orthogonality  \cite{afm}.  The Gauss--Borel factorization of a Hankel matrix, which plays the role of a moment matrix, leads in the classical case to a natural description of algebraic facts regarding orthogonal polynomials on the real line (OPRL) such as recursion relations and Christoffel--Darboux formula. In that case we have a chain of orthogonal polynomials $\{P_l(x)\}_{l=0}^\infty$ of increasing degree $l$. In \cite{afm} we extended that approach to the multiple orthogonality scenario, and the Gauss--Borel factorization of an appropriate moment matrix leaded to sequences of families of multiple orthogonal polynomials in the real line (MOPRL), $\big\{Q_{[\vec\nu_{1}(l);\vec\nu_{2}(l-1)]}^{(\rII,a_1(l))}\big\}_{l=0}^\infty$ and $\big\{\bar Q^{(\rI,a_1(l))}_{[\vec\nu_{2}(l);\vec\nu_{1}(l-1)]}\big\}_{l=0}^\infty$. 
The recursion relations are relations constructed in terms of the linear forms in these sequences. However, the Christoffel--Darboux formula given  in Proposition \ref{pro:cd}, that  was re-deduced by linear algebraic means (Gauss--Borel factorization), was not expressed in terms of linear forms belonging to the mentioned sequences. This situation is rather different to the OPRL case, in that classical case the Christoffel-Darboux formula is expressed in terms of orthogonal polynomials in the sequence. The aim of this paper is to show that, within that scheme, we can deduce  an alternative but equivalent MOPRL Christoffel--Darboux formula constructed in terms of linear forms in the sequences $\big\{Q_{[\vec\nu_{1}(l);\vec\nu_{2}(l-1)]}^{(\rII,a_1(l))}\big\}_{l=0}^\infty$ and $\big\{\bar Q^{(\rI,a_1(l))}_{[\vec\nu_{2}(l);\vec\nu_{1}(l-1)]}\big\}_{l=0}^\infty$ as in OPRL situation.

\subsection{Historical background}
Simultaneous rational approximation starts back in 1873 when Hermite proved the transcendence of the Euler number e \cite{He}.  Later,  K. Mahler delivered  at the University of Groningen several lectures \cite{Ma} where he settled down the  foundations of this theory, see also \cite{Co} and \cite{Ja}.  Simultaneous rational approximation when expressed in terms of Cauchy transforms leads to multiple orthogonality of polynomials. Given an interval $\Delta \subset \R$ of the real line, let ${\mathcal{M}}(\Delta)$ denote all the finite positive Borel measures with support containing infinitely many points in $\Delta$. Fix $\mu \in {\mathcal{M}}(\Delta)$, and let us consider  a system of weights $\vec w=(w_1,\ldots,w_p)$ on $\Delta$, with $p \in {\mathbb{N}}$; i.e. $w_1,\dots,w_p$ being  real integrable functions on $\Delta$ which does not change sign on $\Delta$. Fix a multi-index $\vec \nu=(\nu_1,\ldots, \nu_p) \in {\mathbb{Z}}_+^p,$ ${\mathbb{Z}}_+=\{0,1,2,\ldots\}$, and denote $|\vec \nu|=\nu_1+\cdots+\nu_p$.
Then, there exist polynomials, $A_1,\ldots, A_p$  not all identically equal to zero which satisfy the following orthogonality relations
\begin{align}\label{tipoI}
\int_{\Delta}  x^{j} \sum_{a=1}^{p} A_a(x)w_{a} (x)\d\mu (x)&=0,  & \deg
 A_{a}&\leq\nu_{a}-1,&   j&=0,\ldots, |\vec \nu|-2.
\end{align}
Analogously, there exists a polynomial $B$ not identically equal to zero, such that
\begin{align}\label{tipoII}
\int_{\Delta} x^{j} B (x) w_{b} (x) \d\mu (x)&=0, & \deg B &\leq|\vec \nu|,&  j&=0,\ldots, \nu_b-1, \quad b=1,\ldots,p.
\end{align}
These are the so called multiple orthogonal polynomials of type I and type II, respectively, with respect to the combination $(\mu, \vec w, \vec \nu)$ of the measure $\mu$, the systems of weights $\vec w$ and the multi-index $\vec \nu.$ When $p=1$ both definitions coincide with standard orthogonal polynomials on the real line. Given a measure $\mu \in {\mathcal{M}}(\Delta)$ and a system of weights $\vec w$ on $\Delta$ a multi-index $\vec \nu$ is called type I or type II normal if $\deg A_a$ must equal to $\nu_a-1,$ $a=1,\ldots,p$, or $\deg B$ must equal to $|\vec \nu|-1$, respectively. When for a pair $(\mu, \vec w)$ all the multi-indices are type I or type II normal, then the pair is called type I perfect or type II perfect respectively. Multiple orthogonal of polynomials have been employed in several proofs of irrationality of numbers. For example, in \cite{Be} F. Beukers shows that Apery's proof  \cite{Ap} of the irrationality of $\zeta(3)$ can be placed in the context of a combination of type I and type
II multiple orthogonality which is called mixed type multiple orthogonality of polynomials. More recently, mixed type approximation has appeared in random matrix and non-intersecting Brownian motion theories, \cite{BleKui}, \cite{daems-kuijlaars}, \cite{arno}.  Sorokin \cite{So} studied a simultaneous rational approximation construction which is closely connected with multiple orthogonal  polynomials of mixed type. In \cite{assche}  a Riemann--Hilbert problem was found that characterizes multiple orthogonality of type I and II, extending in this way the result previously found in \cite{fokas} for standard orthogonality. In \cite{daems-kuijlaars} mixed type multiple orthogonality was analyzed from this perspective.
\subsection{Perfect systems and MOPRL of mixed type}
In order to introduce  multiple orthogonal polynomials of mixed type we consider two systems of weights  $\vec w_1=(w_{1,1},\dots,w_{1,p_1})$  and $\vec w_2=(w_{2,1},\dots,w_{2,p_2})$ where  $p_1,p_2\in\N$,  and two multi-indices $\vec \nu_1=(\nu_{1,1},\dots,\nu_{1,p_1})\in{\mathbb{Z}}_+^{p_1}$ and $\vec \nu_2=(\nu_{2,1},\dots,\nu_{2,p_2})\in {\mathbb{Z}}_+^{p_2}$ with $|\vec \nu_1|=|\vec \nu_2|+1$. There exist polynomials $A_1,\ldots, A_{p_1},$ not all identically zero, such that $\deg A_s < \nu_{1,s}$, which satisfy the following relations
\begin{equation}\label{orth}
\int_{\Delta} \sum_{a=1}^{p_1} A_a(x)w_{1,a} (x)w_{2,b}(x)x^{j} d\mu (x)=0, \quad  j=0,\ldots, \nu_{2,b}-1,\quad b=1,\ldots,p_2.
\end{equation}
They are called mixed multiple-orthogonal polynomials with respect to the combination $(\mu,\vec w_1,\vec w_2,\vec \nu_1,\vec\nu_2)$ of the measure $\mu,$ the systems of weights $\vec w_{1}$ and $\vec w_2$ and the multi-indices $\vec \nu_1$ and $\vec \nu_2.$  It is easy to show that finding the polynomials $A_1,\ldots, A_{p_1}$ is equivalent to solving  a system of $|\vec \nu_2|$ homogeneous linear equations  for the $|\vec \nu_1|$ unknown coefficients of the polynomials. Since $|\vec \nu_1|=|\vec \nu_2|+1$  the system always has a nontrivial solution.   The matrix of this system of equations is the so called moment matrix, and the study of its Gauss--Borel factorization will be the cornerstone of this paper. Observe that when $p_1=1$ we are in the type II case and if $p_2=1$ in  type I case. Hence in general we can find a solution of  \eqref{orth} where there is an $a \in \{1,\ldots,p_1\}$ such that $\deg A_a< \nu_{1,a}-1$. When given a combination $(\mu,\vec w_1,\vec w_2)$ of a measure $\mu \in {\mathcal{M}
}(\Delta)$ and systems of weights $\vec w_1$ and $\vec w_2$ on $\Delta$ if for each pair of multi-indices $(\vec \nu_1,\vec \nu_2)$ the conditions \eqref{orth} determine that $\deg A_a=\nu_{1,a}-1,$ $a=1, \ldots, p_1$, then we say that the combination $(\mu,\vec w_1,\vec w_2)$ is perfect. In this case
we can determine a unique system of mixed type orthogonal polynomials $\big(A_1, \ldots, A_{p_1}\big)$ satisfying \eqref{orth} requiring for  $a_1 \in \{1, \ldots p_1\}$ that $A_{a_1}$ monic. Following \cite{daems-kuijlaars} we say that we have a type II normalization and denote the corresponding system of polynomials by   $A_a^{(\rII,a_1)},$ $j=1, \ldots, p_1$. Alternatively, we can proceed as follows, since the system of weights is perfect from \eqref{orth} we deduce that
\begin{align*}
\int x^{\nu_{1,r_1}} \sum_{a=1}^{p_1} A_a(x)w_{1,a} (x)w_{2,b}(x) \d\mu (x)\neq 0.
\end{align*}
Then, we can determine a unique system of mixed type of multi-orthogonal polynomials
$(A_1^{(\rI,a_2)},\ldots,A_{p_2}^{(\rI,a_2)})$ imposing that
\begin{align*}
\int x^{\nu_{1,a_2}} \sum_{a=1}^{p_1} A_a^{(\rI,a_2)}(x)w_{1,a} (x)w_{2,b}(x) \d\mu (x)=1,
\end{align*}
which is a type I normalization. We will use the notation $A_{[\vec\nu_1;\vec\nu_2],a}^{(\rII,a_1)}$ and $A_{[\vec\nu_1;\vec\nu_2],a}^{(\rI,a_2)}$ to denote these multiple orthogonal polynomials with type II and I normalizations, respectively.

A known illustration of perfect combinations $(\mu, \vec w_1, \vec w_2)$ can be constructed with an arbitrary positive finite Borel measure $\mu$ and  systems of weights formed with exponentials:
\begin{align}\label{exponentials}
&  (\Exp{\gamma_1x},\ldots,\Exp{\gamma_px}),& \gamma_i &\neq \gamma_j, &i &\neq j,& i,j& = 1,\ldots,p,
\end{align}
 or by binomial functions
\begin{align}\label{binomials}
  &((1-z)^{\alpha_1},\ldots,(1-z)^{\alpha_p}),& \alpha_i -\alpha_j &\not\in {\mathbb{Z}}, &i &\neq j,& i,j& =
  1,\ldots,p.
\end{align}
or combining both classes, see \cite{Nik}. Recently, in \cite{LF2} the authors were able to prove perfectness for a wide class of systems of weights. These systems of functions, now called Nikishin systems, were introduced by E.M. Nikishin  \cite{Nik} and initially named MT-systems (after Markov and Tchebycheff).

\subsection{Borel--Gauss factorization and multiple orthogonality of mixed type. A remainder}

Orthogonal polynomials and the theory of integrable systems has been connected in several ways in the mathematical literature. We are particularly interested in the one based in the Gauss--Borel factorization that was developed in \cite{adler}-\cite{adler-van moerbeke 2}, and applied further in \cite{cum}-\cite{afm2}.  These papers set the basis for the method we use in this paper to get an alternative Christoffel--Darboux formula for MOPRL of mixed type.

In the following we extract from  \cite{afm} the necessary material for the construction of the mentioned alternative Christoffel--Darboux formula. We introduce the moment matrix and recall how the Borel--Gauss factorization leads to multiple orthogonality. Then, we outline how the recursion relations appears by introducing a Jacobi type semi-infinite  matrix and recall the reader the Chistoffel--Darboux formula \cite{daems-kuijlaars0,daems-kuijlaars}.
\subsubsection{The moment matrix}
Fixed a composition $\vec n_\alpha$, $\alpha=1,2$, any given $l\in\Z_+:=\{0,1,2,\dots\}$, see \cite{stanley},  determines uniquely the following non-negative
integers $k_{\alpha}(l)\in\Z_+$, $a_{\alpha}(l)\in \{1,2,\dots,p_{\alpha}\}$ and $r_{\alpha}(l)$ such that
$0\leq r_{\alpha}(l)<n_{\alpha,a_{\alpha}(l)}$ and
\begin{align}\label{i}
l&=\begin{cases}
      k_{\alpha}(l)|\vec n_{\alpha}|+n_{\alpha,1}+\dots+n_{\alpha,a_{\alpha}(l)-1}+r_{\alpha}(l), &  a_{\alpha}(l)\neq1,\\
      k_{\alpha}(l)|\vec n_{\alpha}|+r_{\alpha}(l),  & a_{\alpha}(l)=1.
  \end{cases}
\end{align}

We define now the monomial strings as vectors that may be understood as sequences of monomials according to the
composition $\vec n_{\alpha}$, $\alpha=1,2$,  introduced previously.
\begin{align*}
 \chi_{\alpha}&:=\begin{pmatrix}
                 \chi_{\alpha,[0]}\\
                 \chi_{\alpha,[1]}\\
                 \vdots\\
                 \chi_{\alpha,[k]}\\
                 \vdots
                \end{pmatrix} &\mbox{where}& &
 \chi_{\alpha,[k]}:=\begin{pmatrix}
                 \chi_{\alpha,[k],1}\\
                 \chi_{\alpha,[k],2}\\
                 \vdots\\
                 \chi_{\alpha,[k],a_{\alpha}}\\
                 \vdots\\
                 \chi_{\alpha,[k],p_{\alpha}}
                    \end{pmatrix} & &\mbox{and}& &
\chi_{\alpha,[k],a_{\alpha}}:=\begin{pmatrix}
                      x^{kn_{\alpha,a_{\alpha}}}\\
                      x^{kn_{\alpha,a_{\alpha}}+1}\\
                      \vdots\\
                      x^{kn_{\alpha,a_{\alpha}}+(n_{\alpha,a_{\alpha}}-1)}
                     \end{pmatrix}.
\end{align*}
In a similar manner for  $\alpha=1,2$ we  define the weighted monomial strings
\begin{align*}
\xi_{\alpha}&:=\begin{pmatrix}
                 \xi_{\alpha,[0]}\\
                 \xi_{\alpha,[1]}\\
                 \vdots\\
                 \xi_{\alpha,[k]}\\
                 \vdots
                \end{pmatrix} &\mbox{where}& &
 \xi_{\alpha,[k]}:=\begin{pmatrix}
                 w_{\alpha,1}\chi_{\alpha,[k],1}\\
                 w_{\alpha,2}\chi_{\alpha,[k],2}\\
                 \vdots\\
                 w_{\alpha,p_{\alpha}}\chi_{\alpha,[k],p_{\alpha}}
                    \end{pmatrix}.
\end{align*}
For any given $l\in\Z_+$ and $a_{\alpha}:={1,2,\dots,p_{\alpha}}$ we define
\begin{align*}
 \nu_{\alpha,a_{\alpha}}(l):=\begin{cases}
                              k_{\alpha}(l)|\vec n_{\alpha}|+n_{\alpha,a_{\alpha}}-1, & a_{\alpha}<a_{\alpha}(l),\\
                              k_{\alpha}(l)|\vec n_{\alpha}|+r_{\alpha}(l), & a_{\alpha}=a_{\alpha}(l),\\
                              k_{\alpha}(l)|\vec n_{\alpha}|-1,& a_{\alpha}>a_{\alpha}(l).
                             \end{cases}
\end{align*}
Notice  that $\nu_{\alpha,a_{\alpha}}(l)$ is the hightest degree of all the monomials
of type $a_{\alpha}$ up to the component $\chi_{\alpha}^{(l)}$ included, of the monomial string. Actually
\begin{align*}
\chi_{\alpha}^{(l)}=x^{\nu_{\alpha,a_{\alpha}(l)}}.
\end{align*}

 Given $l\geq 1$ and  $a_{\alpha}=1,\cdots, p_{\alpha}$ the $+$ ($-$) associated integer is  the smallest (largest) integer
$ l_{\{+,a_{\alpha}\}}$ ($l_{\{-,a_{\alpha}\}}$) such that  $l_{\{+,a_{\alpha}\}} \geq l$ ($l_{\{-,a_{\alpha}\}} \leq l$ )
and $a( l_{\{+,a_{\alpha}\}})=a_{\alpha}$ ($a( l_{\{-,a_{\alpha}}\})=a_{\alpha}$).
It can be shown that
\begin{align}\label{ai}
\begin{aligned}
  l_{\{-,a_{\alpha}\}}&:=\begin{cases} k_{\alpha}(l)|\vec n_{\alpha}|+\sum_{i=1}^{a_{\alpha}}n_{\alpha,i}-1, & a_{\alpha}<a_{\alpha}(l), \\
  l, &a_{\alpha}=a_{\alpha}(l),\\
  k_{\alpha}(l)|\vec n_{\alpha}|-\sum_{i=a_{\alpha}+1}^{p_{\alpha}}n_{\alpha,i}-1, & a_{\alpha}>a_{\alpha}(l-1), \end{cases}\\
  l_{\{+,a_{\alpha}\}}&:=\begin{cases}
    (  k_{\alpha}(l)+1)|\vec n_{\alpha}|+\sum_{i=1}^{a_{\alpha}-1}n_{\alpha,i},& a_{\alpha}<a_{\alpha}(l),\\
  l,&a_{\alpha}=a_{\alpha}(l),\\
 ( k_{\alpha}(l)+1)|\vec n_{\alpha}|-\sum_{i=a_{\alpha}}^{p_{\alpha}}n_{\alpha,i},& a_{\alpha}>a_{\alpha}(l).
 \end{cases}
 \end{aligned}
\end{align}

Finally given the weighted monomials $\xi_{\vec n_{\alpha}}$, associated to the compositions $\vec n_{\alpha}$,  $\alpha=1,2$, we introduce
the moment matrix in the following manner
\begin{definition}
  The moment matrix is given by
\begin{align}
  \label{compact.g}
  g_{\vec n_1,\vec n_2}:=\int \xi_{\vec n_1}(x)\xi_{\vec n_2}(x)^\top\d \mu(x).
\end{align}
\end{definition}
\subsubsection{Multiple Orthogonality of mixed type}

\begin{definition}\label{ladef}
For a given a perfect combination  $(\mu, \vec w_1,\vec w_2)$ we define
\begin{enumerate}
  \item The  Gauss--Borel factorization  (also known as   $LU$  factorization) of a semi-infinite moment matrix $g$,
  determined by $(\mu, \vec w_1,\vec w_2)$,  is the
  problem of finding the solution of
\begin{align}\label{facto}
  g&=S^{-1}\bar S, & S&=\begin{pmatrix}
    1&0&0&\cdots \\
    S_{1,0}&1&0&\cdots\\
    S_{2,0}&S_{2,1}&1&\cdots\\
    \vdots&\vdots&\vdots&\ddots
    \end{pmatrix}\in G_{-}, &
   \bar S^{-1}&=\begin{pmatrix}
     \bar S_{0,0}'&\bar S_{0,1}'&\bar S_{0,2}'&\cdots\\
     0&\bar S_{1,1}'&\bar S_{1,2}'&\cdots\\
     0&0&\bar S_{2,2}'&\cdots&\\
     \vdots&\vdots&\vdots&\ddots
   \end{pmatrix}\in G_{+},
\end{align}
where $S_{i,j},\bar S'_{i,j}\in\R.$
\item In terms of these matrices we construct the polynomials
    \begin{align} \label{defmops}
A^{(l)}_a:={\sum}'_iS_{l,i}x^{k_1(i)},
\end{align}
where the sum $\sum'$ is taken for a fixed $a=1,\dots,p_1$ over those $i$
such that $a=a_1(i)$ and $i\leq l$. We also construct the dual polynomials
\begin{align} \label{defdualmops}
\bar A^{(l)}_b:={\sum}'_jx^{k_2(j)}\bar S_{j,l}',
\end{align}
where the sum $\sum'$ is taken for a given $b$ over those $j$ such that $b=a_2(j)$ and $j\leq l$.
\item   Strings of linear forms and dual linear forms associated with  multiple ortogonal polynomials and their duals  are
  defined by
  \begin{align}\label{linear forms S}
Q:=  \begin{pmatrix}
    Q^{(0)}\\
    Q^{(1)}\\
    \vdots
  \end{pmatrix}&=S\xi_{1},&
  \bar Q:=\begin{pmatrix}
    \bar Q^{(0)}\\
    \bar Q^{(1)}\\
    \vdots
  \end{pmatrix}&=(\bar S^{-1})^\top\xi_{2},
\end{align}
\end{enumerate}

\end{definition}
%
%
%
%
%
Then
\begin{pro}
  \label{proposition: linear forms and mops}
  \begin{enumerate}
    \item The linear  forms and their duals, introduced in Definition \ref{ladef}, are given by
\begin{align}\label{linear.forms}
 Q^{(l)}(x)&:= \sum_{a=1}^{p_1}A^{(l)}_{a}(x)w_{1,a}(x),&
\bar Q^{(l)}(x)&:= \sum_{b=1}^{p_2}\bar  A^{(l)}_{b}
(x)w_{2,b}(x).
\end{align}
\item The orthogonality relations
 \begin{align}\label{linear.form.orthogonality}
 \begin{aligned}
  \int  Q^{(l)}(x)w_{2,b}(x)x^k\d \mu(x)&=0,&0&\leq k\leq \nu_{2,b}(l-1)-1,&b&=1,\dots,p_2,\\
      \int  \bar Q^{(l)}(x)w_{1,a}(x)x^k\d \mu(x)&=0,&0&\leq k\leq \nu_{1,a}(l-1)-1,&a&=1,\dots,p_1,
 \end{aligned}
   \end{align}
   are fulfilled.
   \item   The following multiple bi-orthogonality relations among linear forms and their duals
\begin{align}\label{biotrhoganility}
  \int  Q^{(l)}(x)\bar Q^{(k)}(x)\d \mu(x)&=\delta_{l,k},& l,k\geq 0,
\end{align}
hold.
\item     We have the following identifications
    \begin{align*}
A^{(l)}_a&=A_{[\vec\nu_{1}(l);\vec\nu_{2}(l-1)],a}^{(\rII,a_1(l))}, &
\bar A^{(l)}_b&= A^{(\rI,a_1(l))}_{[\vec\nu_{2}(l);\vec\nu_{1}(l-1)],b},
\end{align*}
in terms of multiple orthogonal polynomials of mixed type with two  normalizations $\rI$ and $\rII$, respectively.
  \end{enumerate}
\end{pro}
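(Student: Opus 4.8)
The plan is to read every assertion off the two triangular identities hidden in the factorization \eqref{facto}, namely $Sg=\bar S\in G_+$ and $g\bar S^{-1}=S^{-1}\in G_-$, after rewriting matrix products as integrals against $\d\mu$ by means of \eqref{compact.g}. For item (1) I would just expand the definitions in \eqref{linear forms S}. Since $S$ is lower triangular with unit diagonal, $Q^{(l)}=\sum_i S_{l,i}(\xi_1)_i$ is a finite sum over $i\le l$, and each entry $(\xi_1)_i$ is a single weighted monomial $w_{1,a_1(i)}x^{(\cdots)}$; collecting terms according to the value $a=a_1(i)$ reproduces exactly the polynomials $A^{(l)}_a$ of \eqref{defmops} and yields the first formula in \eqref{linear.forms}. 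The dual statement is identical once one notes that $\bar S^{-1}\in G_+$ makes $(\bar S^{-1})^\top$ lower triangular, so grouping the entries of $\xi_2$ by $b=a_2(j)$ recovers \eqref{defdualmops}.

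The bi-orthogonality (3) is the cleanest step and I would carry it out first, at the matrix level. Writing $Q$ and $\bar Q$ as the column vectors of \eqref{linear forms S}, one has
\[
\int Q\,\bar Q^\top\,\d\mu=S\Big(\int\xi_1\xi_2^\top\,\d\mu\Big)\bar S^{-1}=S\,g\,\bar S^{-1}=S\,S^{-1}\bar S\,\bar S^{-1}=\I,
\]
using \eqref{compact.g} and $g=S^{-1}\bar S$; reading off the $(l,k)$ entry gives \eqref{biotrhoganility}.

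For the orthogonality relations (2) I would exploit the triangular type of each factor separately. From $Sg=\bar S$ upper triangular the $(l,m)$ entry vanishes for $m<l$, while
\[
(Sg)_{l,m}=\sum_i S_{l,i}\int(\xi_1)_i(\xi_2)_m\,\d\mu=\int Q^{(l)}(\xi_2)_m\,\d\mu,
\]
so $Q^{(l)}$ is $\d\mu$-orthogonal to every weighted monomial $(\xi_2)_m$ with $m<l$; symmetrically, $g\bar S^{-1}=S^{-1}$ lower triangular gives $\int(\xi_1)_m\bar Q^{(l)}\,\d\mu=0$ for $m<l$. It then remains to repackage these index-flat conditions into the per-weight, degree-indexed form of \eqref{linear.form.orthogonality}: for fixed $b$ the entries $(\xi_2)_m$ of type $b$ occurring for $m\le l-1$ run through $w_{2,b}x^k$ up to the top degree recorded by $\nu_{2,b}(l-1)$, and likewise the type-$a$ entries of $\xi_1$ are controlled by $\nu_{1,a}(l-1)$ for the duals.

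Finally, the identifications (4) I would obtain by matching characterizing data and invoking \emph{uniqueness from perfectness}. The relations just proved are precisely the defining orthogonality \eqref{orth} for the multi-index pair $[\vec\nu_1(l);\vec\nu_2(l-1)]$ (respectively $[\vec\nu_2(l);\vec\nu_1(l-1)]$ for the duals). The normalizations are built in: the diagonal $S_{l,l}=1$ forces the leading term $x^{k_1(l)}$ in $A^{(l)}_{a_1(l)}$, so that polynomial is monic, which is the $(\rII,a_1(l))$ normalization, while the unit pairing $\int Q^{(l)}\bar Q^{(l)}\,\d\mu=1$ from (3) supplies the $(\rI,a_1(l))$ normalization of the duals. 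Since the combination $(\mu,\vec w_1,\vec w_2)$ is perfect, the solution of \eqref{orth} with a prescribed normalization is unique, which closes the identifications. The main obstacle I anticipate is exactly the bookkeeping in (2) and (4): converting $\int Q^{(l)}(\xi_2)_m\,\d\mu=0$, $m<l$, into the precise degree windows $0\le k\le\nu_{2,b}(l-1)-1$, and verifying that the degree profile of the $A^{(l)}_a$ matches $\vec\nu_1(l)$. This forces careful use of the explicit index formulas \eqref{i} and \eqref{ai}, and demands attention to the off-by-one conventions relating the counting functions $\nu_{\alpha,a}(l)$ to the degree bounds appearing in \eqref{orth}.
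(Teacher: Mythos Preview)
Your proposal is correct and follows the natural Gauss--Borel route that underlies these results; note, however, that the present paper does not actually supply a proof of this proposition---it is quoted as background from \cite{afm}, where the argument is precisely the one you outline (read the factorization identities $Sg=\bar S$, $g\bar S^{-1}=S^{-1}$ entrywise as integrals via \eqref{compact.g}, obtain bi-orthogonality from $Sg\bar S^{-1}=\I$, and then invoke perfectness for the identifications). Your anticipated obstacle is the only real work: the translation in item~(2) from ``$\int Q^{(l)}(\xi_2)_m\,\d\mu=0$ for $m<l$'' to the degree windows $0\le k\le \nu_{2,b}(l-1)-1$ requires unpacking the block structure of $\xi_2$ via \eqref{i} and the definition of $\nu_{\alpha,a}(\cdot)$, and one must be careful with the convention that $\nu_{2,b}(l-1)$ counts the number of type-$b$ monomials among the first $l$ entries (so that the top admissible degree is $\nu_{2,b}(l-1)-1$), together with the boundary cases where this count is zero.
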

%
%
%
%

\subsubsection{Functions of the second kind}
The Cauchy transforms of the linear forms \eqref{linear.forms} play a crucial role in the
Riemann--Hilbert problem associated with the multiple orthogonal polynomials of mixed type
\cite{daems-kuijlaars}.  Observe that the construction of multiple orthogonal polynomials  performed so far is synthesized in the following
strings of multiple orthogonal polynomials and their duals
\begin{align}\label{mop-s}
\begin{aligned}
\A_{a}&:=  \begin{pmatrix}
    A^{(0)}_{a}\\
    A^{(1)}_{a}\\
    \vdots
  \end{pmatrix}=S\chi_{1,a},&
  \bar\A_{b}&:= \begin{pmatrix}
 \bar A^{(0)}_{b}\\
    \bar A^{(1)}_{b}\\
    \vdots
  \end{pmatrix}=(\bar S^{-1})^\top\chi_{2,b},&
\end{aligned}
\end{align}
for $a=1,\dots,p_1$ and $b=1,\dots,p_2$.
In order to complete these formulae and in terms of $  \chi^*_a:=z^{-1}\chi_a(z^{-1})$
let us introduce the  following formal semi-infinite vectors
\begin{align}\label{cauchy-S}
\begin{aligned}
  \Cs_b&=\begin{pmatrix}
    C_b^{(0)}\\C_b^{(1)}\\\vdots
  \end{pmatrix}=\bar S\chi_{2,b}^*(z),&
  \bar\Cs_a&=\begin{pmatrix}
    \bar C_a^{(0)}\\\bar C_a^{(1)}\\\vdots
  \end{pmatrix}=(S^{-1})^\top\chi_{1,a}^*(z),&
\end{aligned}\end{align}
for $a=1,\dots,p_1$ and $b=1,\dots,p_2$, that we call strings of second kind functions.
These objects are actually Cauchy transforms of the linear forms $Q^{(l)}$, $l \in {\mathbb{Z}}_+$, whenever the
series converge and outside the support of the measures involved.%
\begin{pro}\label{pro:cauchy tr}
For each  $l\in {\mathbb{Z}}_+$ the second kind functions can be expressed as follows
   \begin{align}
   \begin{aligned}
C_b^{(l)}(z)&=\int_\R\frac{Q^{(l)}(x)w_{2,b}(x)}{z-x}\d \mu(x),& z \in D_b^{(l)} \setminus \operatorname{supp}
(w_{1,b}\d \mu(x)),\\
\bar C_a^{(l)}(z)&=\int_\R \frac{\bar Q^{(l)}(x)w_{1,a}(x)}{z-x}\d \mu(x),&  z \in \bar D_a^{(l)} \setminus
\operatorname{supp} (w_{2,a}\d \mu(x)).
\end{aligned}
\end{align}
\end{pro}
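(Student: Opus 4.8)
The plan is to evaluate the Cauchy transform on the right-hand side directly, by expanding the Cauchy kernel into a geometric series and then recognizing the resulting moments as the matrix entries produced by the Gauss--Borel factorization. For $z$ in the relevant domain, outside the support of the measure, one has the convergent expansion
\begin{align*}
\frac{1}{z-x}&=\sum_{m\geq 0}\frac{x^{m}}{z^{m+1}},
\end{align*}
so that, granting termwise integration,
\begin{align*}
\int_\R\frac{Q^{(l)}(x)w_{2,b}(x)}{z-x}\d\mu(x)&=\sum_{m\geq 0}z^{-m-1}\int_\R Q^{(l)}(x)w_{2,b}(x)x^{m}\d\mu(x).
\end{align*}
First I would observe that, as $k$ ranges over $\Z_+$, the type-$b$ blocks $\chi_{2,[k],b}$ exhaust every non-negative power of $x$ exactly once and in increasing order, so that the components of the weighted string $w_{2,b}\chi_{2,b}$ are precisely $w_{2,b}(x)x^{m}$, $m=0,1,2,\dots$, while the components of $\chi^{*}_{2,b}(z)=z^{-1}\chi_{2,b}(z^{-1})$ are exactly $z^{-m-1}$. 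Hence the displayed series is nothing but the $l$-th entry of the vector $\big(\int_\R Q(x)\,(w_{2,b}\chi_{2,b}(x))^{\top}\d\mu(x)\big)\chi^{*}_{2,b}(z)$.

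Next I would compute that moment matrix by means of the factorization. Using $Q=S\xi_{1}$ (see \eqref{linear forms S}), the matrix $\int_\R Q\,(w_{2,b}\chi_{2,b})^{\top}\d\mu$ equals $S$ times the type-$b$ columns of $g=\int\xi_{1}\xi_{2}^{\top}\d\mu$; since $g=S^{-1}\bar S$ we have $Sg=\bar S$, so this matrix is exactly the type-$b$ column block of $\bar S$. Reassembling, the vector above is $\bar S\chi^{*}_{2,b}(z)=\Cs_b$ by the definition \eqref{cauchy-S}, and reading off the $l$-th component yields the claimed formula for $C_b^{(l)}$. The statement for $\bar C_a^{(l)}$ follows from the same computation with the roles of the two systems exchanged: expanding the kernel and using $\bar Q=(\bar S^{-1})^{\top}\xi_{2}$ gives $\int_\R \bar Q\,(w_{1,a}\chi_{1,a})^{\top}\d\mu=(\bar S^{-1})^{\top}(g^{\top})\big|_{a}=(S^{-1})^{\top}\big|_{a}$, because $g^{\top}=\bar S^{\top}(S^{-1})^{\top}$, so the resulting vector equals $(S^{-1})^{\top}\chi^{*}_{1,a}(z)=\bar\Cs_a$.

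The algebra above is purely formal; the genuine difficulty is analytic. I expect the main obstacle to be justifying the interchange of summation and integration and, above all, pinning down the precise region $D_b^{(l)}\setminus\operatorname{supp}(w_{1,b}\d\mu)$ on which the formal series defining $\Cs_b$ converges and actually represents the Cauchy transform as an analytic function. This requires controlling the growth in $m$ of the moments $\int_\R Q^{(l)}w_{2,b}x^{m}\d\mu$ against the decay of $z^{-m-1}$, i.e.\ estimating the radius of convergence of the geometric expansion relative to the support of the measure, so that the identity obtained at the level of formal power series in $z^{-1}$ lifts to an equality of genuine analytic functions. Once convergence is secured on a neighbourhood of infinity, analytic continuation extends the equality to all of $D_b^{(l)}$ minus the support, and symmetrically for $\bar D_a^{(l)}$.
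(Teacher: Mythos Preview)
The paper itself does not prove this proposition; it is quoted as background from \cite{afm}, so there is no proof here to compare against. Your argument is the standard one (and is essentially what appears in \cite{afm}): expand the Cauchy kernel as a geometric series, observe that the type-$b$ components of $\xi_{2}$ run through $w_{2,b}(x)x^{m}$, $m\geq 0$, so that $\int_\R Q\,(w_{2,b}\chi_{2,b})^{\top}\d\mu$ is precisely the type-$b$ column block of $Sg=\bar S$, and then read off the $l$-th entry of $\bar S\chi^{*}_{2,b}(z)=\Cs_{b}$; the dual identity follows by the symmetric computation you indicate. Your identification of the analytic issue---the legitimacy of termwise integration and the description of the domain $D_b^{(l)}$ on which the formal series converges and represents the Cauchy transform---is also accurate, and is exactly the point where the statement ceases to be pure linear algebra.
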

\subsubsection{Recursion relations, a Jacobi type matrix}
The moment matrix has a  Hankel type symmetry that implies the recursion relations and the Christoffel--Darboux
formula. We  consider the shift operators $\Upsilon_{\alpha}$ defined by
\begin{align}
(\Upsilon_{\alpha})_{l,j}&:=\delta_{j,(l+1)_{\{+,a_{\alpha}(l)\}}}
\end{align}
Wich satisfy the following relation
\begin{align*}
 \Upsilon_{\alpha}\chi_{\alpha}(x)&=x\chi_{\alpha}(x) \Longrightarrow \Upsilon_{\alpha}\xi_{\alpha}(x)=x\xi_{\alpha}(x)
\end{align*}

In terms of these shift matrices we can describe the particular Hankel symmetries for the moment matrix
\begin{pro}
  The moment matrix $g$ satisfies the Hankel type symmetry
      \begin{align}\label{sym2}
 \Upsilon_1g  =
 g\Upsilon_2^\top.
  \end{align}
\end{pro}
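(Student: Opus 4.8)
The plan is to reduce the matrix identity \eqref{sym2} to the scalar eigenrelation $\Upsilon_\alpha\xi_\alpha(x)=x\,\xi_\alpha(x)$ recorded immediately above the statement, combined with the fact that $g=\int\xi_1(x)\xi_2(x)^\top\d\mu(x)$ depends linearly on its integrand. The strategy is to compute the two sides $\Upsilon_1 g$ and $g\,\Upsilon_2^\top$ separately, show that each one equals the single matrix-valued moment $\int x\,\xi_1(x)\xi_2(x)^\top\d\mu(x)$, and conclude that they coincide. In other words, the Hankel symmetry is nothing but the statement that multiplication by $x$ can be represented either by $\Upsilon_1$ acting on the left slot $\xi_1$ or by $\Upsilon_2$ acting on the right slot $\xi_2$.

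For the left-hand side I would pull the constant (in $x$) matrix $\Upsilon_1$ inside the integral and use the eigenrelation on the first monomial string:
\begin{align*}
\Upsilon_1 g=\int \Upsilon_1\xi_1(x)\,\xi_2(x)^\top\d\mu(x)=\int x\,\xi_1(x)\,\xi_2(x)^\top\d\mu(x).
\end{align*}
For the right-hand side the same idea applies after transposing, since $\xi_2(x)^\top\Upsilon_2^\top=\big(\Upsilon_2\xi_2(x)\big)^\top$, so that
\begin{align*}
g\,\Upsilon_2^\top=\int \xi_1(x)\,\big(\Upsilon_2\xi_2(x)\big)^\top\d\mu(x)=\int \xi_1(x)\,\big(x\,\xi_2(x)\big)^\top\d\mu(x)=\int x\,\xi_1(x)\,\xi_2(x)^\top\d\mu(x).
\end{align*}
Comparing the two displays yields $\Upsilon_1 g=g\,\Upsilon_2^\top$ at once.

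The only point requiring care, and thus the main (though mild) obstacle, is the legitimacy of moving the semi-infinite shift matrices $\Upsilon_\alpha$ through the integral sign. This is harmless here because, by the definition $(\Upsilon_\alpha)_{l,j}=\delta_{j,(l+1)_{\{+,a_\alpha(l)\}}}$, each row of $\Upsilon_\alpha$ contains exactly one nonzero entry; hence every entry of the product $\Upsilon_\alpha\xi_\alpha$ is a single monomial rather than an infinite series, and the interchange of the matrix action with integration is an entrywise identity involving no convergence issue. With that interpretation fixed, the two computations above are purely formal and the proof is complete.
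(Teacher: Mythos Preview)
Your proof is correct and is precisely the argument the paper has in mind: the proposition is stated in the paper without an explicit proof, since it follows immediately from the eigenrelation $\Upsilon_\alpha\xi_\alpha(x)=x\,\xi_\alpha(x)$ and the definition $g=\int\xi_1(x)\xi_2(x)^\top\d\mu(x)$, exactly as you compute. Your remark that each row of $\Upsilon_\alpha$ has a single nonzero entry, so the interchange of the shift with the integral is an entrywise identity, is the right justification for the one formal step involved.
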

From this symmetry we see that the following is consistent
\begin{definition}
We define the matrices
\begin{align*}
 J&:=S\Upsilon_1 S^{-1}=\bar S \Upsilon_2^\top\bar S^{-1}=J_++J_-,& J_+&:=(  S\Upsilon_1 S^{-1})_+, & J_-&:=(\bar S \Upsilon_2^\top\bar S^{-1})_-,
\end{align*}
where the sub-indices + and $-$denote the upper triangular and strictly lower triangular projections.
 \end{definition}
The recursion  relations follow immediately from the eigenvalue property
\begin{align}\label{rel}
 J Q(x)&=x Q(x) &  \bar{Q}(x)^{\top} J&=x \bar{Q}(x)^{\top}.
\end{align}

\subsubsection{Christoffel--Darboux formula}
\begin{definition}
The  Christoffel--Darboux kernel is
\begin{align}
  \label{def.CD}
  K^{[l]}(x,y)&:=\sum_{k=0}^{l-1}Q^{(k)}(y)\bar Q^{(k)}(x)
\end{align}
\end{definition}
In \cite{daems-kuijlaars0,daems-kuijlaars} it was shown using a Riemann--Hilbert problem approach that
\begin{pro}\label{pro:cd}
 For  $l\geq \max(|\vec n_1|,|\vec n_2|)$ the following Christoffel--Darboux formula
  \begin{align}
  \begin{aligned}
 (x-y)K^{[l]}(x,y)=&
\sum_{b =1}^{p_2}\bar Q^{(\rII,b)}_{[\vec\nu_2(l-1)+\vec e_{2,b};\vec \nu_1(l-1)]}(x)
Q^{(\rI,b)}_{[\vec\nu_1(l-1);\vec \nu_2(l-1)-\vec e_{2,b}]}(y)\\
&-\sum_{a=1}^{p_1}\bar Q^{(\rI,a)}_{[\vec\nu_2(l-1);\vec \nu_1(l-1)-\vec
e_{1,a}]}(x)Q^{(\rII,a)}_{[\vec\nu_1(l-1)+\vec e_{1,a};\vec \nu_2(l-1)]}(y).
\end{aligned}
    \label{cd3}
\end{align}
holds.
\end{pro}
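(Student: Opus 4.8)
The plan is to run the classical ``commutator with a spectral projector'' argument, now adapted to the banded generalized Jacobi matrix $J$. First I would read the kernel \eqref{def.CD} as a bilinear pairing of the strings $Q$ and $\bar Q$ of Definition \ref{ladef}. Writing $\Pi_l$ for the diagonal projector onto the first $l$ coordinates, $(\Pi_l)_{i,j}=\delta_{i,j}$ for $i<l$ and $0$ otherwise, we have
\begin{align*}
K^{[l]}(x,y)=\bar Q(x)^\top\Pi_l\,Q(y).
\end{align*}
Multiplying by $(x-y)$ and inserting the eigenvalue relations \eqref{rel} in the forms $x\,\bar Q(x)^\top=\bar Q(x)^\top J$ and $y\,Q(y)=J\,Q(y)$ converts the scalar factor into a commutator,
\begin{align*}
(x-y)K^{[l]}(x,y)=\bar Q(x)^\top\big(J\Pi_l-\Pi_l J\big)Q(y)=\bar Q(x)^\top[J,\Pi_l]\,Q(y).
\end{align*}

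Next I would exploit the band structure of $J$ to localize the commutator. Since $J=S\Upsilon_1S^{-1}$ with $S\in G_-$ and $\Upsilon_1$ shifting each index to the next one of the same type, the upper part $J_+$ has upper bandwidth $|\vec n_1|$; dually, from $J=\bar S\Upsilon_2^\top\bar S^{-1}$ with $\bar S^{-1}\in G_+$, the strictly lower part $J_-$ has lower bandwidth $|\vec n_2|$. Writing $[J,\Pi_l]=[J_+,\Pi_l]+[J_-,\Pi_l]$ and using $[M,\Pi_l]_{i,j}=M_{i,j}\big((\Pi_l)_{j,j}-(\Pi_l)_{i,i}\big)$, the two commutators are supported on opposite corners straddling level $l$: $[J_-,\Pi_l]$ occupies rows $l\leq i<l+|\vec n_2|$, columns $l-|\vec n_2|\leq j<l$ with sign $+$, while $[J_+,\Pi_l]$ occupies rows $l-|\vec n_1|\leq i<l$, columns $l\leq j<l+|\vec n_1|$ with sign $-$. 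The hypothesis $l\geq\max(|\vec n_1|,|\vec n_2|)$ is exactly what keeps both windows inside the bulk, where the combinatorial formulas \eqref{i}, \eqref{ai} for $a_\alpha(l)$, $\nu_{\alpha,a_\alpha}(l)$ and the associated integers $l_{\{\pm,a_\alpha\}}$ hold without edge corrections.

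Finally I would identify each corner block with the shifted mixed-type polynomials. The mechanism is that a partial row (respectively column) sum of $J$ against a string returns, by the recursion \eqref{rel}, that string multiplied by the argument but truncated at level $l$; re-expressing the truncation through the Gauss--Borel factors shows that the lower corner $\bar Q(x)^\top[J_-,\Pi_l]Q(y)$ splits, according to the $p_2$ weight directions carried by $\Upsilon_2$, into the $p_2$ rank-one products $\bar Q^{(\rII,b)}_{[\vec\nu_2(l-1)+\vec e_{2,b};\vec\nu_1(l-1)]}(x)\,Q^{(\rI,b)}_{[\vec\nu_1(l-1);\vec\nu_2(l-1)-\vec e_{2,b}]}(y)$, $b=1,\dots,p_2$, while the upper corner $\bar Q(x)^\top[J_+,\Pi_l]Q(y)$ splits, according to the $p_1$ directions carried by $\Upsilon_1$, into $-\sum_{a=1}^{p_1}\bar Q^{(\rI,a)}_{[\vec\nu_2(l-1);\vec\nu_1(l-1)-\vec e_{1,a}]}(x)\,Q^{(\rII,a)}_{[\vec\nu_1(l-1)+\vec e_{1,a};\vec\nu_2(l-1)]}(y)$. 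Summing the two corners gives \eqref{cd3}.

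I expect the genuine work to sit in this last identification. The commutator identity and the localization are essentially automatic once the two band widths are pinned down; the real obstacle is matching the finitely many corner entries of $J_\pm$ with the precise multi-index displacements $\pm\vec e_{\alpha,\cdot}$ and, above all, checking that the leading and subleading coefficients extracted from the rows $S_{l,\cdot}$ and the columns $\bar S'_{\cdot,l}$ of the factorization reproduce the type~$\rI$ and type~$\rII$ normalizations of Proposition \ref{proposition: linear forms and mops}(4). Establishing that each corner collapses to exactly $p_2$ (respectively $p_1$) rank-one terms, with the correct normalization in each, is where the argument must be carried out with care.
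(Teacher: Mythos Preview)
The paper does not actually prove this proposition. It is quoted as background: the text attributes it to Daems--Kuijlaars via a Riemann--Hilbert argument and remarks that an algebraic proof (Gauss--Borel based) was given in \cite{afm}. So there is no in-paper proof to compare your attempt against.

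That said, your first two steps --- writing $K^{[l]}=\bar Q^\top\Pi_l Q$, turning $(x-y)K^{[l]}$ into $\bar Q^\top[J,\Pi_l]Q$ via \eqref{rel}, and localizing the commutator to the two finite corner windows using the bandwidths of $J_\pm$ --- are exactly the argument the paper runs to prove its \emph{main} result, Theorem~1 (the alternative CD formula). There the authors stop after localization and record the answer as a double sum of $\bar Q^{(j)}J_{j,i}Q^{(i)}$ over explicit index rectangles $\sigma_1[l]$, $\sigma_2[l]$; they deliberately do \emph{not} collapse the corners into the $p_1+p_2$ shifted linear forms of \eqref{cd3}. Indeed the motivation stated in the introduction is precisely that the forms in \eqref{cd3} are not members of the sequences $\{Q^{(l)}\}$, $\{\bar Q^{(l)}\}$, whereas the Theorem~1 expression is.

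Your third step --- identifying each corner with a sum of $p_\alpha$ rank-one products carrying the specific multi-indices $\vec\nu_\alpha(l-1)\pm\vec e_{\alpha,\cdot}$ and the $\rI$/$\rII$ normalizations --- is therefore the entire content separating Proposition~\ref{pro:cd} from Theorem~1. You are right that this is where the real work lies, but as written it is only asserted, not proved: you have not shown that the corner blocks of $J_\pm$ factor as rank-one pieces indexed by the weight directions, nor that the resulting row/column combinations of $S$ and $\bar S^{-1}$ coincide with the correctly normalized $Q^{(\rII,a)}_{[\cdot;\cdot]}$, $\bar Q^{(\rI,a)}_{[\cdot;\cdot]}$, etc. That identification is carried out in \cite{afm}, not here, and it is not a formality --- the corner of $J$ is genuinely a full block (see the example \eqref{J}), and turning it into $p_1+p_2$ rank-one terms requires reorganizing the entries through the factorization, which is a separate computation you would still need to supply.
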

Here $\{\vec e_{i,a}\}_{a=1}^{p_i}\subset \R^{p_i}$ stands for the vectors in the respective canonical basis, $i=1,2$.
In \cite{afm} it was given an algebraic proof of this statement not relying on analytic conditions. We refer the interested reader to \cite{simon} for a complete survey of the subject.

\section{Alternative Christoffel--Darboux formula for multiple orthogonal polynomials of mixed type}

The result of this paper is the following

\begin{theorem}
For $l\geq \max\{|\vec n_1|,|\vec n_2|\}$ the following Christofel--Darboux formula holds
  \begin{align*}
 (y-x)K^{[l]}(x,y)&=\smashoperator{\sum_{(i,j)\in \sigma_1[l]}}\bar{Q}(x)^{(j)} J_{j,i} Q(y)^{(i)}-
 \smashoperator{\sum_{(i,j)\in\sigma_2[l]}}\bar{Q}(x)^{(j)} J_{j,i} Q(y)^{(i)}
\end{align*}
where
\begin{align*}
  \sigma_1[l]&:=\big\{l,\dots, (l)_{\{+,r_1(a_1(l)-1)\}}\big\}\times \big\{(l-1)_{\{-,r_1(a_1(l-1)+1)\}},\dots,l-1\big\},\\
    \sigma_2[l]&:=\big\{(l-1)_{\{-,r_2(a_2(l-1)+1)\}},\dots,l-1\big\}\times\big\{l,\dots,(l)_{\{+,r_2(a_2(l)-1)\}}\big\}.
\end{align*}
\end{theorem}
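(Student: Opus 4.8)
The plan is to follow the classical OPRL route, reading the kernel \eqref{def.CD} as a matrix sandwich and converting the prefactor $(y-x)$ into a commutator with the generalized Jacobi matrix $J$. Let $\Pi_l$ denote the semi-infinite diagonal projection whose diagonal entries labelled $0,1,\dots,l-1$ are $1$ and the rest $0$. Since the two strings of linear forms satisfy $\bar Q(x)^\top\Pi_l Q(y)=\sum_{k=0}^{l-1}\bar Q^{(k)}(x)Q^{(k)}(y)$, the kernel is nothing but
\[
  K^{[l]}(x,y)=\bar Q(x)^\top\Pi_l\,Q(y).
\]

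Next I would multiply by $(y-x)$ and insert the two recursion relations \eqref{rel}. Writing $yQ(y)=JQ(y)$ and $x\bar Q(x)^\top=\bar Q(x)^\top J$, the scalars $y$ and $x$ get replaced by $J$ acting to the right and to the left respectively, so that
\[
  (y-x)K^{[l]}(x,y)=\bar Q(x)^\top\big(\Pi_l J-J\Pi_l\big)Q(y)=\bar Q(x)^\top[\Pi_l,J]\,Q(y).
\]
Because $\Pi_l$ is diagonal, the commutator is computed entrywise at once: $([\Pi_l,J])_{j,i}$ equals $J_{j,i}$ when $j<l\le i$, equals $-J_{j,i}$ when $i<l\le j$, and vanishes otherwise. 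Substituting this back splits $(y-x)K^{[l]}$ into a sum of $\bar Q^{(j)}(x)J_{j,i}Q^{(i)}(y)$ over the upper-right block $\{j<l\le i\}$ minus the corresponding sum over the lower-left block $\{i<l\le j\}$; these carry the $+$ and $-$ signs of the asserted identity.

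The remaining—and genuinely technical—step is to show that within each of the two blocks only finitely many entries $J_{j,i}$ are nonzero, and that the rectangles $\sigma_1[l]$ and $\sigma_2[l]$ contain exactly those. This is the band structure of $J$. I would read it off from the integral representation $J_{j,i}=\int x\,Q^{(j)}(x)\bar Q^{(i)}(x)\d\mu(x)$, which follows from \eqref{rel} and the bi-orthogonality \eqref{biotrhoganility}. For the upper block one moves the factor $x$ onto $Q^{(j)}=\sum_a A^{(j)}_a w_{1,a}$ and uses the dual orthogonality \eqref{linear.form.orthogonality}: since $\deg(xA^{(j)}_a)\le\nu_{1,a}(j)+1$, the entry $J_{j,i}$ vanishes as soon as $\nu_{1,a}(i-1)\ge\nu_{1,a}(j)+2$ for every $a$, which forces $i\le(l)_{\{+,r_1(a_1(l)-1)\}}$ and $j\ge(l-1)_{\{-,r_1(a_1(l-1)+1)\}}$. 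Dually, moving $x$ onto $\bar Q^{(i)}=\sum_b\bar A^{(i)}_b w_{2,b}$ and using the orthogonality of $Q^{(j)}$ confines the lower block to $\sigma_2[l]$. Equivalently, the same bounds follow from $J_+=(S\Upsilon_1 S^{-1})_+$ and $J_-=(\bar S\Upsilon_2^\top\bar S^{-1})_-$, since $S,S^{-1},\bar S,\bar S^{-1}$ are triangular and $\Upsilon_\alpha$ couples index $l$ only to $(l+1)_{\{+,a_\alpha(l)\}}$.

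I expect the principal obstacle to be precisely this last bookkeeping: translating the degree inequalities (or the reach of the shifts $\Upsilon_\alpha$) into the exact endpoints $(l)_{\{+,\cdot\}}$ and $(l-1)_{\{-,\cdot\}}$ that delimit $\sigma_1[l]$ and $\sigma_2[l]$, by unwinding the definitions \eqref{i} and \eqref{ai} of $a_\alpha(l)$, $r_\alpha(l)$ and the associated integers. Note that the rectangles may include index pairs at which $J_{j,i}$ happens to vanish; this is harmless, as adding zero terms does not alter the sums. Once the two supports are located inside $\sigma_1[l]$ and $\sigma_2[l]$, the theorem drops out of the commutator expansion.
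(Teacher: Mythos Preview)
Your proposal is correct and follows essentially the same route as the paper: the commutator $[\Pi_l,J]$ is just a compact way of writing the paper's block decomposition $J^{[l,\ge l]}$ versus $J^{[\ge l,l]}$, and both reduce the problem to locating the nonzero entries of $J$ in these off-diagonal blocks. If anything, you supply more justification for the band structure of $J$ (via the orthogonality integrals and via $J_\pm=(S\Upsilon_1S^{-1})_+$, $(\bar S\Upsilon_2^\top\bar S^{-1})_-$) than the paper, which simply states the shape as a lemma.
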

\begin{proof}
Splitting the eigenvalue property  \eqref{rel} into blocks we get
\begin{align*}
 J Q(y)&=y Q(y)\Longrightarrow J^{[l]} Q(y)^{[l]}+J^{[l,\geq l]} Q(y)^{[\geq l]}=yQ(y)^{[l]} \\
 \bar{Q}(x)^{\top} J&=x \bar{Q}(x)^{\top}\Longrightarrow  [\bar{Q}(x)^{\top}]^{[l]} J^{[l]}+[\bar{Q}(x)^{\top}]^{[\geq l]} J^{[\geq l,l]}=x[\bar{Q}(x)^{\top}]^{[l]}
\end{align*}
Multiply the first equation from the left by $[\bar{Q}(x)^{\top}]^{[l]}$ and the second one from the right by
$Q(y)^{[l]}$ substract both results to obtain
\begin{align*}
 [\bar{Q}(x)^{\top}]^{[l]}J^{[l,\geq l]}Q(y)^{[\geq l]}-[\bar{Q}(x)^{\top}]^{[\geq l]} J^{[\geq l,l]}Q(y)^{[l]}&=
 (y-x)[\bar{Q}(x)^{\top}]^{[l]}\cdot Q(y)^{[l]}\\&=(y-x)K^{[l]}(x,y)
\end{align*}
After an brief  study of the shape of $J$ we realize that even though $J^{[l,\geq l]}$ has semi-infinte
length rows, most of its elements are 0. Actually it only contains a finite number of nonzero entries that concentrate
in the lower left corner of itself. The same reasoning applies to $J^{[\geq l,l]}$. This matrix has semi infinite
length columns but again it only contains a finite number of nonzero terms concentrated in the upper right corner of itself.
Of course the number of terms involved in this expression will depend on the value of $[l]$. To be more precise we proceed as follows.  From the Euclidean division we know that for any positive integer $l\in\Z_+$ there exists unique integers $q_i, r_i$, $i=1,2$, the quotient and remainder, such that
 \begin{align*}
   l&=q_ip_i+r_i, & 0&\leq r_i< p_i,& i&=1,2.
  \end{align*}
  After a  study of the shape of $J$ we can state
  \begin{lemma}
    For $l\geq \max\{|\vec n_1|,|\vec n_2|\}$ the  only nonzero elements of $J$ along
 a given row or column are
 \begin{align*}
  \begin{array}{ccccccccc}
              &   &       &   &         J_{(l-1)_{\{-,r_1(a_1(l-1)+1)\}},l}       &   &       &        &            \\
                                          &   &       &   &    *    &   &       &    &                                  \\
                                          &   &       &   & \vdots  &   &       &    &                                  \\
                                          &   &       &   &    *    &   &       &    &                                 \\
   J_{l,(l-1)_{\{-,r_2(a_2(l-1)-1)\}}} & * & \cdots & * & J_{l,l} & * & \cdots & *  & J_{l,(l+1)_{\{+,r_1(a_1(l+1)-1)\}}}\\
                                          &   &       &   &    *    &   &       &    &                                  \\
                                          &   &       &   & \vdots  &   &       &    &                                  \\
                                          &   &       &   &    *    &   &       &    &                                  \\
                 &   &       &   &         J_{(l+1)_{\{+,r_2(a_2(l+1)-1)\}},l}    &   &       &          &                            \\
  \end{array}
 \end{align*}
  \end{lemma}
Using this Lemma we get the desired result and the proof is complete.
\end{proof}

\paragraph{Example}
In order to be more clear let us suppose that $p_1=3$ and $p_2=2$ with  $\vec n_1=(4,3,2)$ and $\vec n_2=(3,2)$. The corresponding Jacobi type matrix has the following shape
\begin{align}\label{J}
\small
J=\left(\begin{BMAT}{c|c}{c|c}
        J^{[12]}& J^{[12,\geq 12]}\\
        J^{[\geq 12,12]} & J^{[\geq 12]}
        \end{BMAT} \right)
        =\left(
  \begin{BMAT}{cccccccccccc|ccccccccccccccc}{cccccccccccc|ccccccccccccccc}
   \textcolor{blue}{*} & \boldsymbol{\textcolor{blue}{1}} & 0 & 0 & 0 & 0 & 0 & 0 & 0 & 0 & 0 & 0 & 0 & 0 & 0 & 0 & 0 & 0 & 0 & 0 & 0 & 0 & 0 & 0 & 0 & 0 & \dots \\
 \boldsymbol{\textcolor{blue}{*}} & \boldsymbol{\textcolor{blue}{*}} &\boldsymbol{\textcolor{blue}{1}} & 0 & 0 & 0 & 0 & 0 & 0 & 0 & 0 & 0 & 0 & 0 & 0 & 0 & 0 & 0 & 0 & 0 & 0 & 0 & 0 & 0 & 0 & 0 & \dots \\
    0 & \boldsymbol{\textcolor{blue}{*}} & \boldsymbol{\textcolor{blue}{\textcolor{blue}{*}}} &\boldsymbol{\textcolor{blue}{1}} & 0 & 0 & 0 & 0 & 0 & 0 & 0 & 0 & 0 & 0 & 0 & 0 & 0 & 0 & 0 & 0 & 0 & 0 & 0 & 0 & 0 & 0 & \dots \\
    0 & 0 & \boldsymbol{\textcolor{blue}{*}} & \boldsymbol{\textcolor{blue}{*}} & \boldsymbol{\textcolor{blue}{*}} & \boldsymbol{\textcolor{blue}{*}} &\boldsymbol{ \textcolor{blue}{*}} & \boldsymbol{\textcolor{blue}{*}} & \boldsymbol{\textcolor{blue}{*}} &\boldsymbol{\textcolor{blue}{1}} & 0 & 0 & 0 & 0 & 0 & 0 & 0 & 0 & 0 & 0 & 0 & 0 & 0 & 0 & 0 & 0 & \dots \\
    0 & 0 & \boldsymbol{\textcolor{blue}{*}} & \textcolor{blue}{*} & \textcolor{blue}{*} & \textcolor{blue}{*} &\textcolor{blue}{*} & \textcolor{blue}{*} & \textcolor{blue}{*} & \boldsymbol{\textcolor{blue}{*}} & 0 & 0 & 0 & 0 & 0 & 0 & 0 & 0 & 0 & 0 & 0 & 0 & 0 & 0 & 0 & 0 & \dots \\
    0 & 0 & \boldsymbol{ \textcolor{blue}{*}} & \boldsymbol{\textcolor{blue}{*}} & \boldsymbol{\textcolor{blue}{*}} &\textcolor{blue}{*} & \textcolor{blue}{*} & \textcolor{blue}{*} &\textcolor{blue}{*} & \boldsymbol{\textcolor{blue}{*}} & 0 & 0 & 0 & 0 & 0 & 0 & 0 & 0 & 0 & 0 & 0 & 0 & 0 & 0 & 0 & 0 & \dots \\
    0 & 0 & 0 & 0 & \boldsymbol{\textcolor{blue}{*}} &\textcolor{blue}{*} & \textcolor{blue}{*} & \textcolor{blue}{*} &\textcolor{blue}{*} & \boldsymbol{\textcolor{blue}{*}} & \boldsymbol{\textcolor{blue}{*}} & \boldsymbol{\textcolor{blue}{*}} & \boldsymbol{\textcolor{blue}{*}} &\boldsymbol{ \textcolor{blue}{1}} & 0 & 0 & 0 & 0 & 0 & 0 & 0 & 0 & 0 & 0 & 0 & 0 & \dots \\
    0 & 0 & 0 & 0 & \boldsymbol{\textcolor{blue}{*}} & \textcolor{blue}{*} & \textcolor{blue}{*} & \textcolor{blue}{*} & \textcolor{blue}{*} &\textcolor{blue}{*} & \textcolor{blue}{*} & \textcolor{blue}{*} & \textcolor{blue}{*} & \boldsymbol{\textcolor{blue}{*}} & 0 & 0 & 0 & 0 & 0 & 0 & 0 & 0 & 0 & 0 & 0 & 0 & \dots \\
    0 & 0 & 0 & 0 & \boldsymbol{\textcolor{blue}{*}} & \boldsymbol{\textcolor{blue}{*}} & \boldsymbol{\textcolor{blue}{*}} & \boldsymbol{\textcolor{blue}{*}} & \textcolor{blue}{*} & \textcolor{blue}{*} &\textcolor{blue}{*} & \textcolor{blue}{*} & \textcolor{blue}{*} & \boldsymbol{\textcolor{blue}{*}} & \boldsymbol{\textcolor{blue}{*}} & \boldsymbol{\textcolor{blue}{*}} &\boldsymbol{\textcolor{blue}{1}} & 0 & 0 & 0 & 0 & 0 & 0 & 0 & 0 & 0 & \dots \\
    0 & 0 & 0 & 0 &   0 & 0 & 0 & \boldsymbol{\textcolor{blue}{*}} & \textcolor{blue}{*} & \textcolor{blue}{*} & \textcolor{blue}{*} &\textcolor{blue}{*}& \textcolor{blue}{*} & \textcolor{blue}{*} &\textcolor{blue}{*} & \textcolor{blue}{*} & \boldsymbol{\textcolor{blue}{*}} & 0 & 0 & 0 & 0 & 0 & 0 & 0 & 0 & 0 & \dots \\
    0 & 0 & 0 & 0 &   0 & 0 & 0& \boldsymbol{\textcolor{blue}{*}} & \boldsymbol{\textcolor{blue}{*}} & \boldsymbol{\textcolor{blue}{*}} &\textcolor{blue}{*}& \textcolor{blue}{*} & \textcolor{blue}{*} &\textcolor{blue}{*}& \textcolor{blue}{*} & \textcolor{blue}{*} & \boldsymbol{\textcolor{blue}{*}} & 0 & 0 & 0 & 0 & 0 & 0 & 0 & 0 & 0 & \dots \\
    0 & 0 & 0 & 0 & 0 & 0 & 0 & 0 & 0 & \boldsymbol{\textcolor{blue}{*}} & \textcolor{blue}{*} & \textcolor{blue}{*} & \textcolor{blue}{*} & \textcolor{blue}{*} &\textcolor{blue}{*}& \textcolor{blue}{*} & \boldsymbol{\textcolor{blue}{*}} & 0 & 0 & 0 & 0 & 0 & 0 & 0 & 0 & 0 & \dots \\
    0 & 0 & 0 & 0 & 0 & 0 & 0 & 0 & 0 & \boldsymbol{\textcolor{blue}{*}} &\textcolor{blue}{*} & \textcolor{blue}{*} &\textcolor{blue}{*}& \textcolor{blue}{*} & \textcolor{blue}{*} &\textcolor{blue}{*}& \boldsymbol{\textcolor{blue}{*}}  &\boldsymbol{ \textcolor{blue}{*}} &\boldsymbol{\textcolor{blue}{1}} & 0 & 0 & 0 & 0 & 0 & 0 & 0 & \dots \\
    0 & 0 & 0 & 0 & 0 & 0 & 0 & 0 & 0 & \boldsymbol{\textcolor{blue}{*}} &\boldsymbol{\textcolor{blue}{*}} & \boldsymbol{\textcolor{blue}{*}} & \boldsymbol{\textcolor{blue}{*}} &\textcolor{blue}{*}& \textcolor{blue}{*} & \textcolor{blue}{*} &\textcolor{blue}{*} & \textcolor{blue}{*} & \boldsymbol{\textcolor{blue}{*}} & 0 & 0 & 0 & 0 & 0 & 0 & 0 & \dots \\
    0 & 0 & 0 & 0 & 0 & 0 & 0 & 0 &  0 & 0 &0 & 0 & \boldsymbol{\textcolor{blue}{*}} &\textcolor{blue}{*} &\textcolor{blue}{*} & \textcolor{blue}{*} &\textcolor{blue}{*} & \textcolor{blue}{*} &\boldsymbol{ \textcolor{blue}{*}} & 0 & 0 & 0 & 0 & 0 & 0 & 0 & \dots \\
    0 & 0 & 0 & 0 & 0 & 0 & 0 & 0 & 0 & 0 & 0 & 0 &\boldsymbol{\textcolor{blue}{*}} & \boldsymbol{ \textcolor{blue}{*}} & \boldsymbol{\textcolor{blue}{*}} & \textcolor{blue}{*} & \textcolor{blue}{*}  & \textcolor{blue}{*} & \boldsymbol{\textcolor{blue}{*}}& \boldsymbol{\textcolor{blue}{*}} & \boldsymbol{\textcolor{blue}{*}} &\boldsymbol{\textcolor{blue}{*}} &\boldsymbol{\textcolor{blue}{1}} & 0 & 0 & 0 & \dots \\
    0 & 0 & 0 & 0 & 0 & 0 & 0 & 0 & 0 & 0 & 0 & 0 &0 & 0 & \boldsymbol{\textcolor{blue}{*}} & \textcolor{blue}{*} & \textcolor{blue}{*}  & \textcolor{blue}{*} & \textcolor{blue}{*}& \textcolor{blue}{*} & \textcolor{blue}{*} &\textcolor{blue}{*} & \boldsymbol{\textcolor{blue}{*}} & 0 & 0 & 0 & \dots \\
    0 & 0 & 0 & 0 & 0 & 0 & 0 & 0 & 0 & 0 & 0 & 0 &0 & 0 & \boldsymbol{\textcolor{blue}{*}} & \textcolor{blue}{*} & \textcolor{blue}{*}  & \textcolor{blue}{*} & \textcolor{blue}{*}& \textcolor{blue}{*} & \textcolor{blue}{*} &\textcolor{blue}{*} & \boldsymbol{\textcolor{blue}{*}} & \boldsymbol{\textcolor{blue}{*}} & \boldsymbol{\textcolor{blue}{*}} &\boldsymbol{\textcolor{blue}{1}} & \dots \\
    0 & 0 & 0 & 0 & 0 & 0 & 0 & 0 & 0 & 0 & 0 & 0 & 0 & 0 &\boldsymbol{\textcolor{blue}{*}} & \boldsymbol{\textcolor{blue}{*}} & \boldsymbol{\textcolor{blue}{*}}  & \boldsymbol{\textcolor{blue}{*}} & \textcolor{blue}{*}& \textcolor{blue}{*} & \textcolor{blue}{*} &\textcolor{blue}{*} & \textcolor{blue}{*} & \textcolor{blue}{*} & \textcolor{blue}{*} &\boldsymbol{ \textcolor{blue}{*}} & \dots \\
    0 & 0 & 0 & 0 & 0 & 0 & 0 & 0 & 0 & 0 & 0 & 0 & 0 & 0 & 0 & 0 & 0 & \boldsymbol{\textcolor{blue}{*}} & \textcolor{blue}{*}& \textcolor{blue}{*} & \textcolor{blue}{*} &\textcolor{blue}{*} & \textcolor{blue}{*} & \textcolor{blue}{*} & \textcolor{blue}{*} & \boldsymbol{\textcolor{blue}{*}} &  \dots \\
    0 & 0 & 0 & 0 & 0 & 0 & 0 & 0 & 0 & 0 & 0 & 0 & 0 & 0 & 0 & 0 & 0 & \boldsymbol{\textcolor{blue}{*}} & \boldsymbol{\textcolor{blue}{*}}& \boldsymbol{\textcolor{blue}{*}} & \textcolor{blue}{*} &\textcolor{blue}{*} & \textcolor{blue}{*} & \textcolor{blue}{*} & \textcolor{blue}{*} & \boldsymbol{\textcolor{blue}{*}}  & \dots \\
    0 & 0 & 0 & 0 & 0 & 0 & 0 & 0 & 0 & 0 & 0 & 0 & 0 & 0 & 0 & 0 & 0 & 0 & 0 &  \boldsymbol{\textcolor{blue}{*}} & \textcolor{blue}{*} &\textcolor{blue}{*} & \textcolor{blue}{*} & \textcolor{blue}{*} & \textcolor{blue}{*} & \boldsymbol{\textcolor{blue}{*}}   & \dots \\
    0 & 0 & 0 & 0 & 0 & 0 & 0 & 0 & 0 & 0 & 0 & 0 & 0 & 0 & 0 & 0 & 0 & 0 & 0 & \boldsymbol{\textcolor{blue}{*}} & \textcolor{blue}{*} &\textcolor{blue}{*} & \textcolor{blue}{*} & \textcolor{blue}{*} & \textcolor{blue}{*} &\textcolor{blue}{*} &\dots\\
0 & 0 & 0 & 0 & 0 & 0 & 0 & 0 & 0 & 0 & 0 & 0 & 0 & 0 & 0 & 0 & 0 & 0 & 0 & \boldsymbol{\textcolor{blue}{*}} & \boldsymbol{\textcolor{blue}{*}} &\boldsymbol{\textcolor{blue}{*}} & \boldsymbol{\textcolor{blue}{*}} & \textcolor{blue}{*} & \textcolor{blue}{*} & \textcolor{blue}{*}  & \dots \\
    0 & 0 & 0 & 0 & 0 & 0 & 0 & 0 & 0 & 0 & 0 & 0 & 0 & 0 & 0 & 0 & 0 & 0 & 0 & 0 & 0 & 0 & \boldsymbol{\textcolor{blue}{*}} & \textcolor{blue}{*} & \textcolor{blue}{*} & \textcolor{blue}{*} & \dots\\
  0 & 0 & 0 & 0 & 0 & 0 & 0 & 0 & 0 & 0 & 0 & 0 & 0 & 0 & 0 & 0 & 0 & 0 & 0 &0 &0 &0 & \boldsymbol{\textcolor{blue}{*}} & \boldsymbol{\textcolor{blue}{*}} & \boldsymbol{\textcolor{blue}{*}} & \textcolor{blue}{*}  & \dots \\
    \vdots & \vdots & \vdots & \vdots & \vdots & \vdots & \vdots & \vdots & \vdots & \vdots & \vdots & \vdots & \vdots & \vdots & \vdots & \vdots & \vdots & \vdots & \vdots & \vdots & \vdots & \vdots & \vdots & \vdots & \vdots & \vdots & \ddots \\
  \end{BMAT}
\right),
\end{align}
where $*$ denotes a non-necessarily null real number.

In our example ($p_1=3$, $p_2=2$, $\vec n_1=(4,3,2)$ and $\vec n_2=(3,2)$)  for $[l]=[12]$ we have
\begin{align*}
 (y-x)K^{[12]}(x,y)&=\left[ \sum_{i=8}^{11} \sum_{j=12}^{15} \bar{Q}(x)^{(i)}J_{i,j} Q(y)^{(j)}\right]-
 \left[\sum_{i=12}^{13} \sum_{j=9}^{11} \bar{Q}(x)^{(i)}J_{i,j} Q(y)^{(j)}\right]
\end{align*}

\subsection{Expressing the Jacobi type matrix in terms of factorization factors}
As we have seen we can write $J$ in terms of $S$ or of $\bar S$, this means that for each term of $J$ has two different expressions, giving relations between
$S$ with $\bar S$. We are not too concerned about these relations since what we want here is the most simple expression we can get
for the elements of $J$. It is easy to realize that this is achieved if we use the expression involving $S$ in order
to calculate the upper part of $J$ and the expression involving $\bar S$ to calculate the lower part of it.  Hence, for every $J_{l,k}$ we will have expressions in terms of the
factorization matrices coefficients and the elements of their inverses --thus, in terms of the MOPRL and associated second kind functions.  The only terms from the factorization matrices (or their inverses) that will be
involved when calculating any $J_{l,k}$ are just those between the main diagonal  and the $l-|n_{1}|$ diagonal (both included)
of $S$ and those between the main diagonal and the $l+|n_2|$ diagonal (both included) of $\bar S$. And not even all of them.
As we are about to see there are three different kinds of elements in $J$. The ones
along the main diagonal, the ones along the immediate closest diagonals to the main one, and finally all the remaining diagonals.
The recursion relation coefficients $J_{k,l}$  are
 ultimately related to the MOPRL and its associated  second kind functions in the following way

\begin{pro}
 The elements of the recursion matrix  $J$ can be written in terms of products of the entries of the LU factorization matrices and its inverses as follows
 \begin{align*}
& \begin{aligned}
   J_{l,l}&=S_{l,(l-1)_{\{-,a_1(l)\}}}+S^{-1}_{(l+1)_{\{+,a_1(l)\}},l}+\sum_{\substack{a=1,\dots,p_1\\a\neq a_{1}(l)}} S_{l,(l-1)_{\{-,a\}}} S^{-1}_{(l+1)_{\{+,a\}},l},\\
  &= \bar S_{l,(l+1)_{\{+,a_2(l)\}}}\bar S^{-1}_{l,l}+\bar S_{l,l}\bar S^{-1}_{(l-1)_{\{-,a_2(l)\}},l}+ \sum_{\substack{a=1,\dots,p_2\\a\neq a_{2}(l)}} \bar S_{l,(l+1)_{\{+,a\}}} \bar S^{-1}_{(l-1)_{\{-,a\}},l},
 \end{aligned}
\\
&\begin{aligned}
 J_{l,l+1}&=S^{-1}_{(l+1)_{\{+,a_1(l)\}},l+1}+ \sum_{\substack{a=1,\dots,p_1\\a\neq a_{1}(l)}} S_{l,(l-1)_{\{-,a\}}} S^{-1}_{(l+1)_{\{+,a\}},l+1}, \\
  J_{l+1,l}&=   \bar S_{l+1,(l+1)_{\{+,a_2(l)\}}} \bar S^{-1}_{l,l} +\sum_{\substack{a=1,\dots,p_2\\a\neq a_{2}(l)}} \bar S_{l+1,(l+1)_{\{+,a\}}} \bar S^{-1}_{(l-1)_{\{-,a\}},l},
\end{aligned}\\
&\begin{aligned}
   J_{l,l+k}&=\sideset{}{^{p_1}}\sum_{a=r_1(a_1(l+k-1)+1)}^{r_1(a_1(l)-1)}
 S_{l,(l-1)_{\{-,a_1\}}} S^{-1}_{(l+1)_{\{+,a_1\}},l+k} &
  2&\leq k\leq (l+1)_{\{+,r_1(a_1(l+1)-1)\}}-l, \\
  J_{l+k,l}&=\sideset{}{^{p_2}}\sum_{a=r_2(a_2(l+k-1)+1)}^{r_1(a_2(l)-1)}
 \bar S_{l+k,(l+1)_{\{+,a_\}}} \bar S^{-1}_{(l-1)_{\{-,a\}},l}, &  2&\leq k\leq (l+1)_{\{+,r_2(a_2(l+1)-1)\}}-l.
\end{aligned}
 \end{align*}
\end{pro}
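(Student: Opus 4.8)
The plan is to read off the entries of $J$ directly from the two factorized expressions $J=S\Upsilon_1S^{-1}=\bar S\Upsilon_2^\top\bar S^{-1}$, using each expression on the triangular half where it is simplest: $S\Upsilon_1S^{-1}$ for the diagonal and the upper entries $J_{l,l+k}$ ($k\geq0$), and $\bar S\Upsilon_2^\top\bar S^{-1}$ for the diagonal and the lower entries $J_{l+k,l}$ ($k\geq0$). First I would insert the explicit form of the shift, $(\Upsilon_1)_{m,n}=\delta_{n,(m+1)_{\{+,a_1(m)\}}}$, to obtain $J_{l,k}=\sum_m S_{l,m}(S^{-1})_{(m+1)_{\{+,a_1(m)\}},k}$, and dually, using $(\Upsilon_2^\top)_{m,n}=\delta_{m,(n+1)_{\{+,a_2(n)\}}}$, the expression $J_{l,k}=\sum_n \bar S_{l,(n+1)_{\{+,a_2(n)\}}}(\bar S^{-1})_{n,k}$.

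The next step is to prune these sums using the triangular structure coming from \eqref{facto}: $S$ and $S^{-1}$ are lower unitriangular, so $S_{l,m}=0$ for $m>l$ (with $S_{l,l}=1$) and $(S^{-1})_{n,k}=0$ for $n<k$ (with $(S^{-1})_{k,k}=1$), while $\bar S,\bar S^{-1}$ are upper triangular. Thus in the first expression only indices with $m\leq l$ and $(m+1)_{\{+,a_1(m)\}}\geq k$ survive. The key observation is that $m\mapsto(m+1)_{\{+,a_1(m)\}}$ is exactly the passage from an index to its successor inside its own label class, so for each label $a\in\{1,\dots,p_1\}$ there is at most one surviving source index, namely the largest index $\leq l$ carrying that label. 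Identities \eqref{i} and \eqref{ai} identify this index with the $-$ associated integer $(l-1)_{\{-,a\}}$ (and with $l$ itself when $a=a_1(l)$), and identify its successor with the $+$ associated integer $(l+1)_{\{+,a\}}$; this is what turns the abstract sums into the closed expressions of the statement.

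With the surviving indices identified, I would split into the three regimes by asking which surviving terms meet the diagonal of $S^{-1}$. For $k=l$ both the term $m=l$ and the term whose successor equals $l$ hit that diagonal, producing the two bare summands $S_{l,(l-1)_{\{-,a_1(l)\}}}$ and $(S^{-1})_{(l+1)_{\{+,a_1(l)\}},l}$ together with the off-diagonal sum over $a\neq a_1(l)$; for $k=l+1$ only the $m=l$ term reaches the diagonal, leaving the single diagonal summand and the off-diagonal sum; for $k=l+j$ with $j\geq2$ no surviving term reaches the diagonal, so every summand is a genuine product of off-diagonal entries and, moreover, the successors of most labels fall short of column $l+j$, so the contributing labels shrink to the cyclic range appearing in the statement. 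The lower entries follow by the same argument applied to $\bar S\Upsilon_2^\top\bar S^{-1}$, the only change being that $\bar S^{-1}$ is not unitriangular, so its diagonal entries $\bar S^{-1}_{l,l}$ survive as explicit factors rather than as $1$'s, which is exactly why the second expression for $J_{l,l}$ and the formula for $J_{l+1,l}$ carry factors $\bar S^{-1}_{l,l}$.

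I expect the main obstacle to be precisely this last piece of bookkeeping: showing that for $j\geq2$ the labels $a$ whose successor $(l+1)_{\{+,a\}}$ is still $\geq l+j$ fill out exactly the cyclic interval from $r_1(a_1(l+j-1)+1)$ to $r_1(a_1(l)-1)$, and controlling the block boundaries, where the label index cycles from $p_\alpha$ back to $1$ and the block counter $k_\alpha$ increments. Getting the endpoints of these cyclic ranges right, rather than off by one label or one block, is the delicate point; it is handled by a careful case analysis built on \eqref{i} and \eqref{ai} together with the reach of each row and column recorded in the preceding Lemma.
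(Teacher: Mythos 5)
Your proposal is correct and follows essentially the same route as the paper, which states this proposition without a formal proof but precedes it with exactly this prescription: read the upper part of $J$ off $S\Upsilon_1 S^{-1}$ and the lower part off $\bar S\Upsilon_2^\top\bar S^{-1}$, insert the explicit shift $(\Upsilon_\alpha)_{m,n}=\delta_{n,(m+1)_{\{+,a_\alpha(m)\}}}$, and prune via the triangular/band structure to land on the three regimes (diagonal, first off-diagonals, remaining diagonals). One small wording slip: your claim that each label contributes at most one surviving source index fails for $k=l$, where the label $a_1(l)$ contributes two (namely $m=l$ and $m=(l-1)_{\{-,a_1(l)\}}$), but your subsequent case analysis already accounts for both, so the conclusion is unaffected.
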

Where, for $r,r'<p$, we have used
\begin{align*}
\sideset{}{^p}\sum_{a=r}^{r'}X_a=\begin{dcases}
  \sum_{a=r}^{r'} X_a, & r\leq r',\\
  \sum_{a=1}^{r'}X_a+\sum_{a=r}^pX_a, & r>r'.
\end{dcases}
\end{align*}

\section*{Acknowledgements}
GA thanks economical support from the Universidad Complutense de Madrid  Program ``Ayudas para Becas y Contratos Complutenses Predoctorales en Espa\~{n}a 2011". MM thanks economical support from the Spanish ``Ministerio de Econom\'{\i}a y Competitividad" research project MTM2012-36732-C03-01,  \emph{Ortogonalidad y aproximacion; Teoria y Aplicaciones}.

\newpage

\end{document}